\newcommand{\tUB}{\textstyle{\mathrm{UB}}}
\newcommand{\tdiam}{\text{diam}}
\newcommand{\real}{\ensuremath{\mathbb{R}}}
\newcommand{\until}[1]{\{1,\dots, #1\}}
\newcommand{\supscr}[2]{#1^{\textup{#2}}}
\def\pgf@circ@spst@path#1{\pgf@circ@bipole@path{spst}{#1}}
\tikzset{switch/.style = {\circuitikzbasekey, /tikz/to path=\pgf@circ@spst@path, l=#1}}
\tikzset{spst/.style = {switch = #1}}
\let\proof\@undefined                        
\let\endproof\@undefined                  
\algnewcommand{\algorithmicgoto}{\textbf{go to}}%
\algnewcommand{\Goto}[1]{\algorithmicgoto~\ref{#1}}%
\algnewcommand{\LineComment}[1]{\Statex \(\triangleright\) #1}
\algnewcommand{\LineCommentN}[1]{\Statex \hspace{1cm}\(\triangleright\) #1}
\newcommand{\argmin}{\operatornamewithlimits{arg\ min}}
\newcommand{\st}{\operatorname{s.t.}}
\newtheorem{prop}{Proposition} 
\newtheorem{thm}{Theorem}
	\newtheorem{assumption}{Assumption}
\newtheorem{lem}{Lemma}
\newtheorem{defn}{Definition}
\newtheorem{problem}{Problem}
\let\oldbibliography\thebibliography
\renewcommand{\thebibliography}[1]{%
  \oldbibliography{#1}%
}
\newcommand{\moh}[1]{{\color{black} #1}}
\newcommand{\mk}[1]{{\color{black} #1}}
\newcommand{\md}[1]{{\color{black} #1}}
\newcommand{\mh}[1]{{\color{black} #1}}
\newcommand{\mok}[1]{{\color{black} #1}}
\newcommand{\kj}[1]{{\color{black} #1}}
\begin{document}

\title{\LARGE \bf \md{Guaranteed Privacy of Distributed Nonconvex
    Optimization via \kj{Mixed-Monotone} Functional Perturbations}} 

\author{%
Mohammad Khajenejad and Sonia Mart{\'\i}nez\\
\thanks{
{M. Khajenejad and S. Mart{\'\i}nez are with the Department of
  Mechanical and Aerospace Engineering, University of California, San
  Diego, San Diego, CA, USA (e-mail: \{mkhajenejad,
  soniamd\}@ucsd.edu).}} 
}

\maketitle
\thispagestyle{empty}
\pagestyle{empty}

\begin{abstract}
  In this paper, we introduce a new notion of guaranteed privacy
  \kj{
    that requires that the change of the range of the corresponding inclusion function to the true
    function is small.} 
  \kj{In particular, leveraging mixed-monotone inclusion functions, we}
  propose a \mok{privacy-preserving mechanism for nonconvex
    distributed optimization, which is based on} \kj{deterministic,
    but unknown,} affine perturbation \mok{of the local objective
    functions, \kj{which is stronger than probabilistic differential privacy}}. \kj{The design requires a robust optimization method
    to characterize the best accuracy that can be achieved by an
    optimal perturbation. Subsequently, this is used to guide the
    refinement of a guaranteed-private perturbation mechanism that
    can achieve a quantifiable accuracy via a theoretical upper bound
    that is shown to be independent of the chosen optimization
    algorithm.}
 \end{abstract}
\vspace{-0.1cm}
\section{Introduction}\label{sec:intro} 
Data privacy and protection have become a critical concern in the
management of cyber-physical systems (CPS) and their public
trustworthiness.
In such applications, malicious agents can expand their attack surface
by extracting valuable information from the many physical, control,
and communication components of the system, inflicting damage on the
CPS and its users. Hence, a great effort is being devoted to design
robust data-security control strategies for these
systems {\cite{cortes2016differential}. \kj{Motivated by this, we aim to investigate an alternative design
  of privacy-preserving mechanisms, which can make the quantification
  of privacy, as well as the associated performance loss, both
  tractable and reasonable.}

{\emph{{Literature Review}.}} Among the many approaches to data
security, one can distinguish privacy-aware methods that protect
sensitive data from worst-case data breaches by adding random
perturbations to it. However, this high data-resiliency can come at
the cost of high performance loss, which either be hard to quantify in
practice, or theoretically bounded by indices that are too large to be
useful. A main \kj{approach to characterize this trade-off while
  preserving privacy} in the literature is that of \emph{differential
  privacy}~\cite{dwork2006calibrating}.
   
  This
method was originally proposed for the protection of databases of
individual records subject to public queries. \kj{In particular, a
system processing sensitive inputs is made differentially private by randomizing its answers in
such a way that the distribution over published outputs is not
too sensitive to the data provided by any single participant. This notion} 
has
been extended to several areas \kj{in machine learning and regression~ \cite{chaudhuri2011differentially,zhang2012functional,hall2013differential}, control (estimation, verification)~\cite{wang2017differential,han2021numerical}, multi-agent systems (consensus, message passing)~\cite{huang2015differentially,han2016differentially,hale2015differentially}, and optimization and games~\cite{ding2021differentially,li2020privacy,ye2021differentially}, thereafter.} 
  \kj{In particular, a notable privacy-preserving mechanism design approach in the literature is based on the idea of message
perturbation, i.e., modifying an original non-private algorithm by
having agents perturb the messages/outputs to their neighbors or a central
coordinator with Laplace or Gaussian noise~\cite{huang2015differentially,han2016differentially,hale2015differentially}. This approach benefits from working
with the original objective functions, however suffers
from a steady-state accuracy error, since for fixed design parameters, the algorithm's output does not correspond the true optimizer in the absence of noise~\cite{nozari2016differentially}.

Another notable approach to privacy relies on functional perturbation, with the idea of having agent(s) independently
perturb their objective function(s) in a differentially private way and then participating in a centralized (distributed) algorithm~\cite{zhang2012functional,chaudhuri2011differentially,nozari2016differentially}}.
 The work in
\cite{chaudhuri2011differentially} proposed
a differentially private classifier by perturbing the objective
function with a linear finite-dimensional function. 
However, 
only the privacy of the underlying finite-dimensional parameter
set--and not the entire objective functions--is preserved. \kj{A sensitivity-based differentially private algorithm was designed in~\cite{zhang2012functional}}
by \kj{perturbing the Taylor
expansion of the cost function,} 
where, unfortunately, the
functional space had to be restricted to the space of quadratic
functions. {\color{black} Similar perturbations proposed in~\cite{hall2013differential} 
but without ensuring the
smoothness and convexity of the perturbed function.} 
In addition, none of~\kj{\cite{chaudhuri2011differentially,zhang2012functional,hall2013differential} provided} a
systematic way to study the effect of added noise on the global
optimizer. 
To address this issue, \kj{~\cite{nozari2016differentially} suggested specific {functional
  perturbations} such that} 
the difference between the
probabilities of events corresponding to any pair of data sets is
bounded by a function of the distance between the data sets, \kj{at the expense of}
a trade-off between the
privacy and the accuracy of the mechanism. 
\kj{Further, the works in
  \cite{han2021numerical,wang2022verification} provided theoretically
  proven numerical methods to quantify differential privacy in high
  probability in estimation and verification, respectively.}
\kj{However}, differential privacy \kj{inherently}
requires the slight change of the \emph{statistics} of the output of the
perturbed function or message if the objective function or the message
sent from one agent changes. This, 
satisfies privacy only in a probabilistic, and not
a 
guaranteed sense. To bridge this gap, we aim
to 
\kj{investigate an alternative deterministic approach to privacy-preserving mechanism design}, \kj{and} also to
relax the convexity assumption \kj{needed in most of the existing works.} 

{\emph{{Contributions}.}} We start by introducing a novel notion of
guaranteed privacy. This notion applies to deterministic, but unknown,
functional perturbations of an optimization problem, and characterizes
privacy in terms of how close the ranges of two sets of functions in a
vicinity are.  This notion of privacy is stronger than that of
differential privacy in the sense that it guarantees that the change
of the inclusion of the true function ranges is small, as opposed to
looking at perturbed function statistics. By exploiting the
differentiability and local Lipschitzness of the objective functions,
we propose a novel perturbation mechanism that relies on the
mixed-monotone inclusion functions of the problem objectives. We then
characterize the \kj{\emph{best accuracy}} that can be achieved by an optimal
perturbation, and use this to guide the refinement of a
guaranteed-private, perturbation mechanism that can achieve a
quantifiable accuracy via a theoretical upper bound. The design
requires a robust optimization approach, and restricts privacy \kj{to}
functions in a given vicinity. Simulations are used to illustrate the
\kj{level of the tightness} of this bound for different nonconvex optimization
algorithms, illustrating the accuracy-privacy compromise.

\section{Preliminaries}
In this section, we introduce basic notation, as well as preliminary
concepts and results used in the sequel.

{\emph{{Notation}.}}  $\mathbb{R}^n,\mathbb{R}^{n \times
  p},\mathbb{D}_n,\mh{\mathbb{R}^n_{\geq 0}}$, \mh{and
  $\mathbb{R}^n_{>0}$} denote the $n$-dimensional Euclidean space, and
the sets of $n$ by $p$ matrices, diagonal $n$ by $n$ matrices, and
nonnegative and positive vectors in $\mathbb{R}^n$, respectively.
Also, $\mathbf{0}_{n \times p},I_n$ and $\mathbf{0}_{n}$ denote the
zero matrix in $\mathbb{R}^{n \times p}$, the identity matrix in
$\mathbb{R}^{n \times n}$, and the zero vector in $\mathbb{R}^n$,
respectively. Further, for $D \subseteq \mathbb{R}^{n}$, $L_2(D)$
denotes the set of square-integrable measurable functions over $D$.
Given $M \in \mathbb{R}^{n \times p}$, $M^\top$ represents its
transpose, $M_{ij}$ denotes $M$'s entry in the $\supscr{i}{th}$ row
and the $j^\textup{th}$ column, $M^{\oplus}\triangleq
\max(M,\mathbf{0}_{n \times p})$, $M^{\ominus}=M^{\oplus}-M$ and
$|M|\triangleq M^{\oplus}+M^{\ominus}$.  Finally, for $a,b \in
\mathbb{R}^n, a\leq b$ means $a_i \leq b_i, \forall i \in
\until{n}$.
\begin{defn}[Hyper-intervals]
  \mh{A (hyper-)interval {$\mathcal{I} \triangleq
      [\underline{\md{z}},\overline{\md{z}}] \subset 
      \mathbb{R}^n$}, or an $n$-dimensional interval, is the set of
    all real vectors $\md{z \in \mathbb{R}^{n}}$ that satisfy
    $\underline{\md{z}} \le \md{z} \le \overline{\md{z}}$. \kj{Moreover,} we
    call 
    $\textstyle{\mathrm{diam}}(\mathcal{I}) \triangleq
    \|\overline{\md{z}}-\underline{\md{z}}\|\mk{_{\infty}\triangleq
      \max_{i \in
        \{1,\cdots,\md{n}\}}\md{|\overline{z}_i-\underline{z}_i|}}$
    the \emph{diameter} or \emph{interval width} of
    $\mathcal{I}$. Finally, $\mathbb{IR}^n$ denotes the space of all
    $n$-dimensional intervals, i.e., \emph{interval
      vectors}. }
\end{defn}
\begin{defn}[$\mathcal{V}$-Adjacent Sets of Functions]
  Given any normed vector space $(\mathcal{V};
  \|\cdot\|_{\mathcal{V}})$ with $\mathcal{V} \subseteq L_2(D)$, two
  sets of functions $F\triangleq \{f_1,\dots,f_n\},F'\triangleq
  \{f'_1,\dots,f'_n\} \subset L_2(D)$ are called
  \kj{\emph{$\mathcal{V}$-adjacent}} if there exists $i_0\hspace{-.1cm} \in
  \until{n}$ such that
\begin{align*}
f_{i_0}-f'_{i_0} \in \mathcal{V}, \quad \text{and} \quad f_i=f'_i, \text{
  for all other } i\neq i_0\,.
\end{align*}
\end{defn}
\section{Problem Formulation} \label{sec:Problem} Consider a group of
$N$ agents \kj{communicating over a network}.
Each agent $i \hspace{-.1cm}\in\hspace{-.1cm} \until{\kj{N}}$ has a local
objective function $\hat f_i \hspace{-.1cm}: \hspace{-.1cm}D \to
\mathbb{R}$, where $D \subset \mathbb{R}^n$ has a nonempty interior.
Consider the problem
\begin{align}\label{eq:opt_init}
  \min_{x \in \mathcal{X}_0} \hat f(x) \hspace{-.1cm}\triangleq
  \hspace{-.1cm}\sum_{i=1}^N \hat f_i(x) \ \st \
  G(x)\hspace{-.1cm}\leq
  \hspace{-.1cm}0,H(x)\hspace{-.1cm}=\hspace{-.1cm}b,x \in
    \mathcal{X}_0,
\end{align}
where the mappings $G:D \to \mathbb{R}^m,H:D \to \mathbb{R}^s$
{are convex}, the vector $b \in \mathbb{R}^s$ are known to all
agents {and $\mathcal{X}_0 \triangleq
  [\underline{x}_0,\overline{x}_0]$ is assumed to be an interval in
  $\mathbb{R}^n$. By applying a  \emph{penalty} method
  \cite[Ch.5]{bertsekas2003convex},} the problem in
\eqref{eq:opt_init} is equivalent to
\begin{align}\label{eq:opt}
\min_{x \in \mathcal{X}_0} f(x) \triangleq \sum_{i=1}^N f_i(x), 
\end{align} {where}, we assume {the following}:
\begin{assumption}
[\kj{Locally Lipschitz, Differentiable and {P}rivate {O}bjective {F}unctions}]\label{ass:mixed_monotonicity} 
Each $f_i$ is \mk{differentiable}, locally Lipschitz in its domain and
only known to agent $i$. Moreover, upper and lower \mh{uniform} bounds
for its Jacobian matrix, $\overline{J}_i^{f},\underline{J}_i^{f} \in
\mathbb{R}^{1 \times \md{n}}$ are only known to agent
$i$. 
\end{assumption}
Note that we do not restrict any $f_i, i \in \until{n}$, to be convex,
nor twice-continuously differentiable. 
  It is
assumed that the problem constraint set \kj{$\mathcal{X}_0$}
  is globally known to each agent.
The problem objective is to define a mechanism so that agents can
solve \eqref{eq:opt} via a distributed, private algorithm. 

It is known that even if $f_i$ and/or its gradient may not be
directly shared among agents, an adversary may be able to infer it by
compounding side information with network communications. This problem
has been addressed via the notion of differential privacy and the
design of differentially-private algorithms, e.g.,
in~\cite{nozari2016differentially,ding2021differentially,
  li2020privacy,ye2021differentially}, \kj{for convex objective functions} (cf.~Section~\ref{sec:intro}
for a thorough review of the relevant work). 
Differential privacy applies a random, additive perturbation to either
the algorithm input data or its output, so that the result becomes
very close to that of the same process when applied to data in a
vicinity. Here, our problem data refers to the problem objective
functions, and the \kj{novel to-be-designed} privacy mechanism consists of applying
deterministic additive perturbations characterized by intervals---but
unknown to \kj{the} adversary. By doing so, the \kj{approximated} range of the
objective functions is also an interval, and privacy can be measured
by how close these intervals are for functions in a  vicinity.
 We formalize this via a mapping $\mathcal{M}$, as
 follows, and provide one such mapping in the next section. 
\begin{defn}[Guaranteed Privacy]\label{defn:guar_privacy}
  Let $\mathcal{M}:L_2(D)^{N} \times \mathbb{IR}^n \to \mathbb{IR}^N$
  be a deterministic interval-valued map from the function space
  $L_2(D)^{N}$ to the space of intervals in $\mathbb{R}^N$.  Given
  {\color{black} $\mathcal{X} \triangleq
    [\underline{x},\overline{x}] \in \mathbb{IR}^n$}
    and a
  ``privacy gap'' $\epsilon \in \mathbb{R}_{>0}$, the map
  $\mathcal{M}$ is \kj{\emph{$\epsilon$-guaranteed private}} 
    with respect to the vicinity $\mathcal{V}$, if for any
  two $\mathcal{V}$-adjacent sets of functions \kj{$F\triangleq \{f_i\}_{i=1}^N$} and \kj{$F'\triangleq \{f'_i\}_{i=1}^N$} that (at
  most) differ in their
  $i_0^\textup{th}$ 
  element and any interval \kj{$\mathcal{I} 
  \in  \mathbb{IR}^N$} 
    such that $\mathcal{M}(F,\kj{\mathcal{X}})
  \subseteq \mathcal{I}$, one has
\begin{align}\label{eq:guaranteed_privacy_inequality}
  \hspace{-.2cm}\textstyle{\mathrm{diam}}{(\mathcal{M}(F',\kj{\mathcal{X}})\hspace{-.05cm}\cap\hspace{-.05cm}
    \mathcal{I})} \hspace{-.1cm}\leq
  \hspace{-.1cm}e^{\epsilon\|f_{i_0}\hspace{-.1cm}-\hspace{-.1cm}f'_{i_0}\|_{\mathcal{V}}}\textstyle{\mathrm{diam}}{(\mathcal{M}(F,\kj{\mathcal{X}}))}.
\end{align}
\end{defn}
It is worth to \kj{re}emphasize the difference between the
notions of guaranteed privacy (introduced above) and differential
privacy
utilized in most of the work in the literature, e.g.,
\cite{nozari2016differentially,ding2021differentially,li2020privacy}.
When differential privacy is considered, the \emph{statistics} of the
output of $\mathcal{M}$, i.e., the \emph{probability} of the value of
$\mathcal{M}$ belonging to some set changes only relatively slightly
if the objective function of one agent changes, and the change is in
$\mathcal{V}$ (cf. \cite[Definition III.1]{nozari2016differentially}
for more details). On the other hand, when guaranteed privacy is
considered, instead of introducing randomness to disguise the
objective function, we implement a range perturbation \kj{of}
$\mathcal{V}$-adjacent functions as defined above, with the end goal
of robustifying the optimization problem \kj{\eqref{eq:opt}} in a controlled manner by an
$\epsilon$ gap. 
This being said, note that our goal is to design a new mechanism (or
mapping) $\mathcal{M}$ that preserves the $\epsilon$-guaranteed
privacy of the objective functions with respect to the problem
solution, regardless of the distributed (nonconvex) optimization
algorithm chosen to arrive at it. In this case, $\mathcal{M}$ can be
interpreted as a {\color{black} preventive }action on the set of
local functions $F$, {\color{black} which will guarantee the privacy
  of \emph{any} distributed non-convex optimization algorithm applied to \kj{solve \eqref{eq:opt}.}}
  So, our problem  
can be cast as \kj{follows}:
\begin{problem}[Guaranteed Privacy-Preserving Distributed Optimization]
  Given the program in \eqref{eq:opt}, \kj{design a mechanism (map) $\mathcal{M}$ that maintains the privacy of {any} convergent distributed nonconvex optimization algorithm in the sense of Definition  \ref{defn:guar_privacy}. In other words, $\mathcal{M}$ is an $\epsilon$-guaranteed privacy-preserving map with some desired $\epsilon >0$. Moreover, the mechanism's guarantee on accuracy
  improves as the level of privacy decreases}.
\end{problem}
\vspace{-0.25cm}
\section{Guaranteed Privacy-preserving Functional
  Perturbation} \label{sec:observer} In this section, we introduce our
proposed strategy to design a guaranteed privacy-preserving mechanism
(or mapping) for distributed nonconvex optimization. The main idea is
to perturb the true objective function by \kj{deterministic but unknown}
\emph{linear additive perturbation function{s}} {in a distributed manner,} such that \kj{the} privacy is
preserved, regardless of the utilized optimization algorithm. Then, we
show that the level of privacy can be estimated by computing the
over-approximation of the range of the true and perturbed functions
using mixed-monotone inclusion functions. For the sake of
completeness, we start by briefly recapping the notions of inclusion
and decomposition functions and mixed-monotonicity that will be used
throughout our main results.
\begin{defn}[Inclusion Functions]\cite[Chapter
  2.4]{jaulinapplied}\label{defn:In_Fun}
  Consider a function ${g}: \mathbb{R}^{n} \to \mathbb{R}^{N}$. The
  interval function $[{g}]:\mathbb{IR}^{n} \hspace{-.1cm}\to
  \hspace{-.1cm} \mathbb{IR}^{N}$ is an inclusion function for ${g}$,
    if
  \begin{align*}
    \forall \mathcal{X} \in \mathbb{IR}^{n}, {g}(\mathcal{X}) \subseteq
    [{g}](\mathcal{X}),
  \end{align*}
  where ${g}(\mathcal{X})$ denotes the true image set {(or range)} of
  ${g}$ 
  {for the domain} $\mathcal{X} \in \mathbb{IR}^{n}$. 
\end{defn}
\begin{prop}[JSS Decomposition]\cite[Corollary
  2]{khajenejad2021tight}\label{prop:JSS_decomp} 
    Let $\mok{g}
  :\mathcal{X} \triangleq [\underline{x},\overline{x}] \in
  \mathbb{IR}^{n} \to \mathbb{R}$ and suppose $\forall x \in
  \mathcal{X}, J^g(x) \in [\underline{J}^g,\overline{J}^g]$, where
  $J^g(x)$ is the gradient of $\mok{g}$ at $x$ and
  $\underline{J}^g,\overline{J}^g$ are known row \mok{vectors} in
  $\mathbb{R}^{1 \times n}$. Then, $g$ can be decomposed into the sum
  of an affine mapping, {\color{black} parameterized by
    a row vector $m \in \real^{1 \times m} \in \mathbf{M}_g$ },
  and a \mok{remainder} mapping {\color{black} $h:
    \mathcal{X} \rightarrow \real$:} 
\begin{align}\label{eq:JSS_decomp}
 \mok{g}(x)=h(x)+mx,  \quad \forall x \in \mathcal{X}, \ \text{where}
\end{align}
\begin{align}\label{eq:H_decomp}
  \mathbf{M}_g \hspace{-.1cm}\triangleq \hspace{-.1cm} \{m\mok{\in \mathbb{R}^{1 \times m}} | m_{j}\hspace{-.1cm}=\hspace{-.1cm}\overline{J}^g_{j} \ \text{ or } \
  m_{j}\hspace{-.1cm}=\hspace{-.1cm}\underline{J}^g_{j}, \;1 \leq j \leq n \}.
\end{align}  
Further, $h$ is a \mok{\emph{Jacobian sign-stable}} (JSS)
{\cite{yang2019sufficient}}
mapping in $\mathcal{X}$ by construction, i.e., its Jacobian \mok{vector}
  entries have constant sign
  over $\mathcal{X}$. Therefore, for each  $j \in \until{n}$,
  either of the
following hold:
\begin{align*}
   J^{h}_{j}(\md{x}) \geq 0, \quad \forall \md{x \in \mathcal{X}},\;\; \text{or} \
  J^{h}_{j}(\md{x}) \leq 0, \quad \forall \md{x \in \mathcal{X}},
\end{align*} 
where $J^{h}(\md{x})$ denotes the Jacobian \mok{vector}
of $h$ at $\md{x \in
  \mathcal{X}}$. 
\end{prop}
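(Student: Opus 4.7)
The plan is to construct the decomposition directly and then verify the JSS property coordinatewise. Given the uniform gradient enclosure $J^g(x) \in [\underline{J}^g, \overline{J}^g]$ for all $x \in \mathcal{X}$, I would pick any $m \in \mathbf{M}_g$ (the set is nonempty since it contains $2^n$ elements, one for each choice of upper or lower bound per coordinate), and define $h(x) \triangleq g(x) - mx$ on $\mathcal{X}$. With this choice, \eqref{eq:JSS_decomp} holds by construction with $mx$ as the affine part, so the only nontrivial content remaining is to establish the JSS property of the remainder $h$.

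Next, I would use linearity of differentiation together with the fact that the gradient of the affine map $x \mapsto mx$ is $m$ to obtain $J^h(x) = J^g(x) - m$ for every $x \in \mathcal{X}$. Then, fixing a coordinate $j \in \until{n}$, the definition of $\mathbf{M}_g$ forces one of two cases. If $m_j = \overline{J}^g_j$, the enclosure $J^g_j(x) \leq \overline{J}^g_j$ for all $x \in \mathcal{X}$ immediately gives $J^h_j(x) \leq 0$ uniformly on $\mathcal{X}$. If instead $m_j = \underline{J}^g_j$, then $J^g_j(x) \geq \underline{J}^g_j$ yields $J^h_j(x) \geq 0$ uniformly. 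Either way, $J^h_j$ retains a constant sign on $\mathcal{X}$; since $j$ was arbitrary, $h$ is JSS on $\mathcal{X}$ as claimed.

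There is no real obstacle in this argument, as it is essentially a coordinatewise bookkeeping exercise once the definition of $\mathbf{M}_g$ is unpacked. The only subtlety worth flagging is that the statement asserts the validity of the decomposition for \emph{every} $m \in \mathbf{M}_g$, not merely a specific choice; the above reasoning handles all $2^n$ possibilities uniformly because the case split is local to each coordinate $j$ and does not couple across coordinates. This independence is precisely what makes the construction attractive: it leaves $n$ binary degrees of freedom in selecting $m$, which can later be exploited (for instance, in building tight mixed-monotone inclusion functions of $g$) without jeopardizing the sign-stability of the remainder.
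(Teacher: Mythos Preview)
Your argument is correct and is exactly the natural coordinatewise verification one expects: define $h(x)=g(x)-mx$, compute $J^h(x)=J^g(x)-m$, and split on whether $m_j=\overline{J}^g_j$ or $m_j=\underline{J}^g_j$ to conclude sign-stability in each coordinate. The paper does not supply its own proof of this proposition---it is quoted as \cite[Corollary~2]{khajenejad2021tight}---but your reasoning matches the standard construction behind that result.
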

\begin{prop}[Mixed-Monotone Inclusion
  Functions]\cite[Proposition 4]{khajenejad2021guaranteed}\label{prop:mm_inclusion} 
    Given the assumptions in Proposition
  \ref{prop:JSS_decomp},
\begin{align}
  \hspace{-.2cm}[\mok{g}]
  (\mathcal{X})\hspace{-.1cm}=\hspace{-.1cm}[h_d(\underline{x},\overline{x})
  \hspace{-.1cm}+\hspace{-.1cm}m^\oplus\underline{x}\hspace{-.1cm}-\hspace{-.1cm}m^\ominus
  \overline{x}, h_d(\overline{x},\underline{x})\hspace{-.1cm}+
  \hspace{-.1cm}m^\oplus\overline{x}\hspace{-.1cm}-\hspace{-.1cm}m^\ominus
  \underline{x}],
\end{align} 
with $h_{d}({x}_1,{x}_2) \triangleq
h(\mok{B}{x}_1+(I_{n}-\mok{B}){x}_2)$, {for any {\color{black}
    ordered} ${x_1, x_2 \in \mathcal{X}}$}, \mok{i.e., $x_1 \leq x_2$
  or $x_2 \leq x_1$}, is an inclusion function for $\mok{g}$. We refer
to this inclusion as the \emph{mixed-monotone inclusion function} of
\kj{$g$}.
Furthermore, $\mok{B} \in
\mathbb{D}_{n}$ 
is a binary diagonal matrix \mok{that identifies the vertex of the
  interval $[x_1,x_2]$ (or $[x_2,x_1]$) that minimizes (or maximizes) the
  JSS function $h$ in the case that $x_1 \leq x_2$ (or $x_2 \leq
  x_1$)}, 
  and {can be} computed as follows:
$\mok{B}=\textstyle{\mathrm{diag}}(\max(\textstyle{\mathrm{sgn}}({\overline{J}^{\mok{g}}}),\mathbf{0}_{1,{n}}))$.
Finally, $h_{d}$ {is} {tight},
i.e., $$\overline{h}_{\mathcal{X}}\hspace{-.1cm}\triangleq\hspace{-.1cm}
h_{d}(\underline{x},\overline{x})\hspace{-.1cm}=\hspace{-.1cm}\min_{x
  \in \mathcal{X}} h(x), \
\underline{h}_{\mathcal{X}}\hspace{-.1cm}\triangleq\hspace{-.1cm}
h_{d}(\overline{x},\underline{x})\hspace{-.1cm}=\hspace{-.1cm}\max_{x
  \in \mathcal{X}} h(x).$$
\end{prop}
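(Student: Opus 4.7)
The plan is to exploit Proposition~\ref{prop:JSS_decomp} to decompose $g(x)=h(x)+mx$ on $\mathcal{X}$, bound each piece separately by using its respective monotonicity structure, and then combine the bounds through interval addition. The claim then reduces to showing (i) $h_d(\underline{x},\overline{x})$ and $h_d(\overline{x},\underline{x})$ tightly bracket the range of $h$ on $\mathcal{X}$, (ii) $m^\oplus\underline{x}-m^\ominus\overline{x}$ and $m^\oplus\overline{x}-m^\ominus\underline{x}$ tightly bracket the range of $x\mapsto mx$ on $\mathcal{X}$, and (iii) the stated formula for $B$ correctly selects the minimizing vertex of $h$.

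For step (i), I would argue coordinate-wise. By construction $h$ is JSS on $\mathcal{X}$, so for each $j\in\{1,\dots,n\}$ the scalar partial $J^h_j$ keeps a constant sign on $\mathcal{X}$, meaning $h$ is monotonic in the $j$-th variable separately. A standard box-minimization argument then shows the global minimum (resp.\ maximum) of $h$ over $\mathcal{X}$ is attained at a vertex whose $j$-th component is $\underline{x}_j$ if $J^h_j\ge 0$ (resp.\ $\overline{x}_j$) and $\overline{x}_j$ if $J^h_j\le 0$ (resp.\ $\underline{x}_j$). Encoding this selection rule via the binary diagonal matrix $B$ yields exactly $h_d(\underline{x},\overline{x})=\min_{x\in\mathcal{X}}h(x)$ and $h_d(\overline{x},\underline{x})=\max_{x\in\mathcal{X}}h(x)$, giving both the inclusion and the tightness.

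For step (ii), I would split $m=m^\oplus-m^\ominus$ with $m^\oplus,m^\ominus\ge 0$ componentwise. Since $m^\oplus$ has nonnegative entries, $m^\oplus x$ attains its minimum (maximum) on $\mathcal{X}$ at $\underline{x}$ (at $\overline{x}$); conversely $-m^\ominus x$ attains its minimum at $\overline{x}$ and maximum at $\underline{x}$. Summing these tight coordinate bounds produces the interval $[m^\oplus\underline{x}-m^\ominus\overline{x},\;m^\oplus\overline{x}-m^\ominus\underline{x}]$ as the exact range of $x\mapsto mx$ on $\mathcal{X}$. Adding the interval bounds from (i) and (ii) via interval arithmetic then yields the stated expression for $[g](\mathcal{X})$, which is an inclusion function in the sense of Definition~\ref{defn:In_Fun}.

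For step (iii), the formula $B=\mathrm{diag}(\max(\mathrm{sgn}(\overline{J}^g),\mathbf{0}_{1,n}))$ must be matched to the JSS decomposition: for each coordinate $j$, the rule in Proposition~\ref{prop:JSS_decomp} selects $m_j\in\{\underline{J}^g_j,\overline{J}^g_j\}$ so that $J^h_j=J^g_j-m_j$ has a consistent sign on $\mathcal{X}$. I would verify that this choice is equivalent to taking $B_{jj}=1$ exactly when $\overline{J}^g_j>0$ (so that $J^h_j\ge 0$ and the minimizing vertex picks $\underline{x}_j$) and $B_{jj}=0$ otherwise. The main obstacle is this last bookkeeping step: carefully reconciling the sign conventions between the Jacobian bounds, the JSS decomposition, and the vertex-selection interpretation of $B$, so that the formulas $h_d(\underline{x},\overline{x})$ and $h_d(\overline{x},\underline{x})$ truly correspond to the minimum and maximum of $h$ on $\mathcal{X}$ rather than being swapped. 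The rest of the argument is standard interval-arithmetic composition.
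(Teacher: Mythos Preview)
The paper does not supply its own proof of this proposition; it is quoted verbatim from \cite[Proposition~4]{khajenejad2021guaranteed} and used as a black box thereafter. Your proposal is therefore not competing with any argument in the present paper.

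That said, your plan is the natural and correct one, and it matches the spirit of the cited source: exploit the JSS structure so that $h$ is coordinatewise monotone on $\mathcal{X}$, conclude that its extrema are attained at the vertex picked out by $B$, handle the affine part $mx$ exactly via the $m^\oplus/m^\ominus$ split (tightness here relies on the fact that $m^\oplus$ and $m^\ominus$ have disjoint supports, so the two pieces can be extremized independently over the box), and sum the two interval enclosures. One small clarification you should make explicit: the sum of the two \emph{tight} ranges for $h$ and for $x\mapsto mx$ is only guaranteed to be an \emph{inclusion} for $g$, not the exact range of $g$, because the extremizing vertices need not coincide; this is consistent with the proposition, which only asserts tightness for $h_d$, not for $[g]$. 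Your flagged ``bookkeeping'' concern about the formula for $B$ is real but resolvable: the stated $B$ corresponds to one particular admissible choice of $m\in\mathbf{M}_g$ in the JSS decomposition (namely the one that makes $J^h_j\ge 0$ precisely when $\overline{J}^g_j>0$), so you should tie the choice of $m$ and the choice of $B$ together rather than treating $m\in\mathbf{M}_g$ as arbitrary.
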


From now on, $[g](\mathcal{X})$ denotes the mixed-monotone inclusion
function of $g$ on $\mathcal{X}$, unless otherwise specified.
\vspace{-.1cm}
\subsection{Guaranteed Privacy-Preserving \mok{Mechanism}}\label{sec:private-mech}
We are ready to introduce a guaranteed privacy-preserving map (or mechanism) through the following theorem.
\begin{thm}[Guaranteed Privacy of Functional
  Perturbation]\label{thm:guar_priv}
  Let $F=\{f_i(x)\}_{i=1}^N=\{h_i(x)+m_ix\}_{i=1}^N$ be the \mok{JSS decompositions of the set of functions} $f_i:\mathcal{X}_0
  \triangleq [\underline{x}_0,\overline{x}_0] \in \mathbb{IR}^n\hspace{-.1cm} \to \hspace{-.1cm} 
  \mathbb{R},i\in \until{n}$, 
    based on
  Proposition~\ref{prop:JSS_decomp}. Suppose \mok{Assumption \ref{ass:mixed_monotonicity} holds,} $\mathcal{X}_0$ is not a
  singleton, $\tilde{m}_i^\top \in \mathbb{R}^{n}$, and 
\begin{align}\label{eq:epsilon}
  \epsilon_i \triangleq \beta(f_i,\tilde{m}_i,\mathcal{X}_0,\delta_i)
  \hspace{-.1cm}\triangleq\hspace{-.1cm}\min_{m \in
    \mathbf{M}_{f_i}}\frac{\ln{(\frac{\Delta^{h_i}_{\mathcal{X}_0}
        +|\hat{m}_i|\Delta+2\delta_i}{\Delta^{h_i}_{\mathcal{X}_0}+|\hat{m}_i|\Delta
        })}}{\delta_i},
 \end{align}
 where $\Delta \triangleq
 \overline{x}_0-\underline{x}_0, \hat{m}_i \triangleq m_i+\tilde{m}_i$,
 $\Delta^ {h_i}_{{\mathcal{X}_0}}\triangleq
 \overline{h}_{i}-\underline{h}_{i}$ and
   $\mathbf{M}_{f_i},\overline{h}_{i},\underline{h}_{i}$
 are given in Propositions \ref{prop:JSS_decomp} and
 \ref{prop:mm_inclusion}.
 Then, 
 the mapping $\mathcal{M}:L_2(D)^N \times \mathcal{X}_0 \to
 \mathbb{IR}^N$ defined as
\begin{align}\label{eq:decomposition}
\begin{array}{c}
  \mathcal{M}(F,\mathcal{X}_0)\hspace{-.1cm}=\hspace{-.1cm}[\mathcal{G}](\mathcal{X}_0), [\mathcal{G}]\triangleq [[g_1]^\top,\dots,[g_N]^\top]^\top,\\
   g_i(x)\triangleq f_i(x)+\tilde{m}_ix, \; \forall x \in \mathcal{X}_0 , \forall i \in \until{n}
   \end{array}
\end{align}
satisfies $\epsilon$-guaranteed privacy where $\epsilon=\max_{i \in
  \until{n}}\epsilon_i$, with respect to the vicinity:
  
  \vspace{-.3cm}
  {\small
 \begin{align}\label{eq:vicinity}
   \hspace{-.25cm} \mathcal{V} \hspace{-.1cm} \triangleq \hspace{-.15cm}
   \{f'_i \hspace{-.1cm} \in \hspace{-.1cm} L_2(D) |
   \forall i \hspace{-.1cm}\in\hspace{-.1cm} \{1,\hspace{-.05cm}\dots,\hspace{-.05cm}N\},
   |{f}'_i(x)\hspace{-.1cm}-\hspace{-.1cm}{f}_i(x)| \hspace{-.1cm}\leq
   \hspace{-.1cm}\delta_{{i}}, \forall x \hspace{-.1cm}\in
   \hspace{-.1cm}\mathcal{X}_0 \}.
 \end{align}}
  \mok{We call $\{\tilde{m}_i\}_{i=1}^N$ the ``perturbation slopes''
    of the mechanism throughout the paper.}
 \end{thm}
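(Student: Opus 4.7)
The plan is to verify \eqref{eq:guaranteed_privacy_inequality} directly, by computing both diameters explicitly in terms of the mixed-monotone inclusion widths from Proposition \ref{prop:mm_inclusion}. First, I would apply that proposition to each $g_i=f_i+\tilde{m}_i x=h_i+\hat{m}_i x$ on $\mathcal{X}_0=[\underline{x}_0,\overline{x}_0]$; the resulting inclusion $[g_i](\mathcal{X}_0)$ is an interval whose componentwise width equals $\Delta^{h_i}_{\mathcal{X}_0}+|\hat{m}_i|\Delta$, combining the tight range of the JSS remainder with the affine contribution. Since the diameter of a hyper-interval is the $\ell_\infty$ norm of its edge lengths, this yields $\mathrm{diam}(\mathcal{M}(F,\mathcal{X}_0))=\max_i(\Delta^{h_i}_{\mathcal{X}_0}+|\hat{m}_i|\Delta)$.

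Next, I would fix any $\mathcal{V}$-adjacent $F'$ differing from $F$ only in index $i_0$ and any $\mathcal{I}=\mathcal{I}_1\times\cdots\times\mathcal{I}_N$ with $\mathcal{M}(F,\mathcal{X}_0)\subseteq\mathcal{I}$, and exploit that the intersection of hyper-intervals is taken componentwise, so that $\mathrm{diam}(\mathcal{M}(F',\mathcal{X}_0)\cap\mathcal{I})=\max_i\mathrm{diam}([g'_i](\mathcal{X}_0)\cap\mathcal{I}_i)$. For each $i\neq i_0$ one has $g'_i=g_i$, so the corresponding intersection equals $[g_i](\mathcal{X}_0)$ and contributes width $\Delta^{h_i}_{\mathcal{X}_0}+|\hat{m}_i|\Delta$ to this max. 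The central claim is the perturbed-component bound $\mathrm{diam}([g'_{i_0}](\mathcal{X}_0)\cap\mathcal{I}_{i_0})\le\Delta^{h_{i_0}}_{\mathcal{X}_0}+|\hat{m}_{i_0}|\Delta+2\delta_{i_0}$. I would prove this by writing $g'_{i_0}=(h_{i_0}+\phi)+\hat{m}_{i_0}x$ with $\phi\triangleq f'_{i_0}-f_{i_0}$ satisfying $|\phi(x)|\le\delta_{i_0}$ on $\mathcal{X}_0$; the pointwise perturbation bound then forces the min and max of the remainder $h_{i_0}+\phi$ to differ from those of $h_{i_0}$ by at most $\delta_{i_0}$, so by the tightness clause of Proposition \ref{prop:mm_inclusion} each endpoint of the inclusion of $g'_{i_0}$ shifts by at most $\delta_{i_0}$, giving the claimed $2\delta_{i_0}$ growth in width.

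Combining the componentwise bounds gives $\mathrm{diam}(\mathcal{M}(F',\mathcal{X}_0)\cap\mathcal{I})\le\max\{\mathrm{diam}(\mathcal{M}(F,\mathcal{X}_0)),\;\Delta^{h_{i_0}}_{\mathcal{X}_0}+|\hat{m}_{i_0}|\Delta+2\delta_{i_0}\}$. Dividing by $\mathrm{diam}(\mathcal{M}(F,\mathcal{X}_0))\ge\Delta^{h_{i_0}}_{\mathcal{X}_0}+|\hat{m}_{i_0}|\Delta$ leaves a ratio of at most $1+\tfrac{2\delta_{i_0}}{\Delta^{h_{i_0}}_{\mathcal{X}_0}+|\hat{m}_{i_0}|\Delta}$; minimizing over $m_{i_0}\in\mathbf{M}_{f_{i_0}}$ and taking logarithms produces exactly $e^{\epsilon_{i_0}\delta_{i_0}}\le e^{\epsilon\delta_{i_0}}$ via the definition \eqref{eq:epsilon} together with $\epsilon=\max_i\epsilon_i$. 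Finally, identifying $\delta_{i_0}$ with $\|f_{i_0}-f'_{i_0}\|_{\mathcal{V}}$ per the vicinity \eqref{eq:vicinity} delivers \eqref{eq:guaranteed_privacy_inequality}.

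The hard part will be the perturbation bound on the $i_0$-th component: the mixed-monotone inclusion of $g'_{i_0}$ formally presupposes a JSS decomposition of $f'_{i_0}$, but \eqref{eq:vicinity} only constrains $f'_{i_0}$ pointwise, so the remainder $h_{i_0}+\phi$ in my proposed decomposition need not remain Jacobian sign-stable. I would sidestep this by reusing the slope $\hat{m}_{i_0}$ and working with the (possibly non-tight but still valid) inclusion endpoints defined through the range of $h_{i_0}+\phi$; since those endpoints depend only on $\min$ and $\max$ of the remainder over $\mathcal{X}_0$, the pointwise bound $|\phi|\le\delta_{i_0}$ propagates directly into the $\pm\delta_{i_0}$ shift required, regardless of whether JSS-ness is preserved.
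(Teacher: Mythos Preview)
Your proposal is correct and follows essentially the same route as the paper's proof: compute $\mathrm{diam}(\mathcal{M}(f_i,\mathcal{X}_0))=\Delta^{h_i}_{\mathcal{X}_0}+|\hat m_i|\Delta$ from Proposition~\ref{prop:mm_inclusion}, use the pointwise bound $|f'_{i_0}-f_{i_0}|\le\delta_{i_0}$ to show the $i_0$-th component's inclusion width can grow by at most $2\delta_{i_0}$, form the ratio, and pass to $e^{\epsilon_{i_0}\delta_{i_0}}\le e^{\epsilon\delta_{i_0}}$ via \eqref{eq:epsilon}. You are in fact a bit more careful than the paper in two places: you handle the coordinates $i\neq i_0$ explicitly (the paper just writes ``taking the maximum of both sides on $i$''), and you flag the issue that $f'_{i_0}$ need not admit its own JSS decomposition, proposing instead to reuse the slope $\hat m_{i_0}$ and bound the remainder range directly---the paper sidesteps this the same way, simply writing $\mathcal{M}(f'_i,\mathcal{X}_0)\subseteq[-\delta_i+\underline h_i+\hat m_i^\oplus\underline x_0-\hat m_i^\ominus\overline x_0,\ \delta_i+\overline h_i+\hat m_i^\oplus\overline x_0-\hat m_i^\ominus\underline x_0]$ without further comment.
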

 From Theorem \ref{thm:guar_priv}, the privacy \mok{gap}
$\epsilon_i=\beta(f_i,\tilde{m}_i,\mathcal{X}_0,\delta_i)$ is a decreasing
function of $\delta_i$, \mok{i.e.,} {\color{black} the smaller $\epsilon$ is,
  the harder it will be to distinguish the solution to problems with
  functions in a vicinity $\mathcal{V}$. }
  {\color{black} Note that, this result shows that by
  making $\delta_i$ large, i.e., by allowing the distance between the
  perturbed and the true function become larger, we can make
  $\epsilon_i$ small and thus increase privacy. However, this will
  directly impact the distance between the optimizers of the
  corresponding problems, which results in a loss of the quality of the
  solution---in the sense of being close to the original one.}
  This intuitively characterizes a trade-off between
privacy and accuracy, which will be discussed
later. 
\begin{proof} 
  \mok{With a slight abuse of notation, let
    $\mathcal{M}(f_i,\mathcal{X}_0)$ denote the $i^\textup{th}$
    argument of the interval vector $ \mathcal{M}(F,\mathcal{X}_0)$.}
  It follows from \eqref{eq:decomposition} and Propositions
  \ref{prop:JSS_decomp} and \ref{prop:mm_inclusion}
    that
\begin{align*}
  \mathcal{M}(f_i,\mathcal{X}_0)=[\underline{h}_{i}\hspace{-.1cm}+\hspace{-.1cm}m_i^\oplus
  \underline{x}_0\hspace{-.1cm}-\hspace{-.1cm}m_i^\ominus
  \overline{x}_0,\overline{h}_{i}\hspace{-.1cm}+\hspace{-.1cm}m_i^\oplus
  \overline{x}_0\hspace{-.1cm}-\hspace{-.1cm}m_i^\ominus
  \underline{x}_0]. 
\end{align*}
 Consequently, 
\begin{align}\label{eq:appr_range}
  \tdiam{(\mathcal{M}(f_i,\mathcal{X}_0))} =
  \Delta^{h_i}_{\mathcal{X}_0}+|\hat{m}_i|\Delta.
\end{align}
  On the other hand,
\mok{\eqref{eq:vicinity}} implies that for any $f'_i$ in the vicinity of
$f_i$, $-\delta_i+f_i(x) \leq f'_i(x) \leq \delta_i+f_i(x), \forall x \in
\mathcal{X}_0$. 
  By adding the
perturbation functions $\tilde{m}_ix$ \mok{and using the JSS
  decomposition of $f_i$ for an arbitrary $m_i \in \mathbf{M}_{f_i}$,}
we obtain $-\delta_i+h_i(x)+{\color{black} \kj{m_i}x} +\tilde{m}_ix \leq
f'_i(x)+\tilde{m}_ix \leq \delta_i+h_i(x)+m_ix+\tilde{m}_ix$, \mok{which} implies:

\vspace{-.3cm}
{\small
\begin{align*}
  \mathcal{M}(f'_i,\mathcal{X}_0) \hspace{-.1cm} \subseteq
  [-\delta_i\hspace{-.1cm}+\hspace{-.1cm}\underline{h}_{i}\hspace{-.1cm}+\hspace{-.1cm}\hat{m}_i^\oplus
  \underline{x}_0\hspace{-.1cm}-\hspace{-.1cm}\hat{m}_i^\ominus
  \overline{x}_0
  ,\delta_i\hspace{-.1cm}+\hspace{-.1cm}\overline{h}_{i}\hspace{-.1cm}+\hspace{-.1cm}\hat{m}_i^\oplus
  \overline{x}_0\hspace{-.1cm}-\hspace{-.1cm}\hat{m}_i^\ominus
  \underline{x}_0].
\end{align*}
}This, together with the fact $\tdiam{(\mathcal{M}(f'_i,\mathcal{X}_0)
  \cap \kj{\mathcal{I}})} \leq \tdiam{(\mathcal{M}(f'_i,\mathcal{X}_0))}$ \mok{for any interval \kj{$\mathcal{I}$}},
results \mok{in} 
\begin{align}\label{eq:epsilon_2}
  \tdiam{(\mathcal{M}(f'_i,\mathcal{X}_0) \cap \kj{\mathcal{I}})} \leq
  2\delta_{{i}}+\Delta^{h_i}_{\mathcal{X}_0}+|\hat{m}_i|\Delta.
\end{align}
Finally, it follows from \eqref{eq:epsilon} , \eqref{eq:appr_range} and \eqref{eq:epsilon_2} that
\begin{align*}
  \frac{\tdiam{(\mathcal{M}(f'_i,\kj{\mathcal{X}_0}) \cap
      \kj{\mathcal{I}})}}{\tdiam{(\mathcal{M}(f_i,\kj{\mathcal{X}_0}) 
     )}} \hspace{-.1cm}\leq\hspace{-.1cm}\min_{m \in
    \mathbf{M}_{f_i}}
  \frac{2\delta_{i}+\Delta^{h_i}_{\mathcal{X}_0}+|\hat{m}_i|\Delta}{\Delta^{h_i}_{\mathcal{X}_0}+|\hat{m}_i|\Delta}\hspace{-.1cm}=e^{\alpha_i},
\end{align*}
with $\alpha_i \triangleq \epsilon_i\delta_i$, \mok{which implies
  $\tdiam{(\mathcal{M}(f'_i,\mathcal{X}_0) \cap \kj{\mathcal{I}})} \leq
  e^{\epsilon_i \delta_i} \tdiam{(\mathcal{M}(f_i,\mathcal{X}_0)
    )}$. Taking the maximum of both sides on $i$ returns the results
  in
  \eqref{eq:guaranteed_privacy_inequality}}. 
\end{proof}
\subsection{Tractable Computation of an Optimal Perturbation
 } \label{sec:optimal-perturbation}
According to \eqref{eq:epsilon}, an important factor that affects the
privacy gap, in addition to $\delta_i$, is the choice of the
\emph{perturbation slope}, $\tilde{m}_i$. In this subsection,
we {\color{black} investigate what the best choice} for $\tilde{m}_i$
is. {\color{black} In the next section, we further discuss how to
  leverage the choice of $\delta_i$ so that privacy is still ensured
  for functions in a particular vicinity. }




{\color{black} It is reasonable to choose
  the  perturbation slopes in such a way that the difference between the 
  minimum of the perturbed function and the true function is
  reduced as much as possible. An approximated upper bound of this
  difference can be obtained by leveraging mixed-monotone inclusion
  functions (cf.~\mok{Proposition~\ref{prop:mm_inclusion}}). Given the
    common nonlinear terms $h$, this bound can be minimized by
    reducing the difference of the linear terms for all possible
    subintervals of the initial interval domain $\mathcal{X}_0$.}
 This results into the
following 
robust optimization problem:
\begin{align}\label{eq:robust_measure}
\begin{array}{rl}
  \tilde{m}^*=&\argmin\limits_{\tilde{m} \in \mathbb{R}^{1 \times n}} |(\hat{m}^{\oplus}-m^\oplus)\underline{x}-(\hat{m}^{\ominus}-m^\ominus)\overline{x}| \\
  & \forall [\underline{x},\overline{x}] \subseteq [\underline{x}_0,\overline{x}_0],
\end{array}
\end{align} 
where $\hat{m} \triangleq m +\tilde{m}$. Note that the choice of
$\tilde{m}$ through \eqref{eq:robust_measure} is not necessarily
optimal in the sense that it minimizes the privacy gap or
\mok{ maximizes the accuracy of the chosen optimization
  method to solve the problem}. 
However, given that our goal is to choose the perturbation slope
\emph{independently} of the chosen optimization method, 
  it is reasonable to use
\eqref{eq:robust_measure}.
  \moh{Moreover}, a significant advantage of designing the perturbation
slope \mok{through}~\eqref{eq:robust_measure} is that it
provides us with a \emph{tractable approach} to obtain 
$\tilde{m}$, \mok{which} is done via the transformation of~\eqref{eq:robust_measure}
into a linear program (LP), discussed in the following lemma.
\begin{lem}[Tractable Computation of Perturbation
  \mok{Slopes}]\label{lem:tractable} 
    The robust
  optimization problem in \eqref{eq:robust_measure} can be
  equivalently reformulated to the following linear program:
\begin{align}\label{eq:LP}
\begin{array}{rll}
  &\min\limits_{\xi \in \mathbb{R}^{2n+1},p_1,p_2 \in \mathbb{R}^{3n}} c^\top \xi \\
  &\st \quad \mok{\Lambda}\xi \leq l, \ p_1^\top d \leq 0, \ p_2^\top d \leq 0 ,\\
  &\quad  \mok{\Gamma}^\top p_1=\xi, \ -\mok{\Gamma}^\top p_2=\xi,p_1 \geq \mathbf{0}_{3n}, \ p_2 \geq \mathbf{0}_{3n},   
\end{array}
\end{align}
where $d_1\hspace{-.1cm}=\hspace{-.1cm}d_2
  \hspace{-.1cm}\triangleq\hspace{-.1cm} \begin{bmatrix}
    \overline{x}_0^\top & \underline{x}_0^\top &
    \mathbf{0}^\top_{n} \end{bmatrix}^\top\hspace{-.1cm},l
  \hspace{-.1cm}\triangleq\hspace{-.1cm} \begin{bmatrix}m^\oplus &
    m^\ominus & 0  \end{bmatrix}^\top\hspace{-.1cm},$
\begin{align*}
  \Gamma\hspace{-.1cm}\triangleq\hspace{-.15cm}\begin{bmatrix} \mathbf{0}_{n \times
      n} & -I_{n} & \mathbf{0}_{n} \\ -I_{n } & \mathbf{0}_{n \times
      n} & \mathbf{0}_{n} \\ I_{n } & I_{n} &
    \mathbf{0}_{n} \end{bmatrix}\hspace{-.1cm},\mok{\Lambda}\hspace{-.1cm}\triangleq\hspace{-.15cm}\begin{bmatrix} -I_{n} &
    \mathbf{0}_{n \times n} & \mathbf{0}_{n} \\ \mathbf{0}_{n \times
      n} & -I_{n} & \mathbf{0}_{n} \\ \mathbf{0}^\top_{n} &
    \mathbf{0}^\top_{n} & -1 \end{bmatrix}\hspace{-.1cm},c
  \hspace{-.1cm}\triangleq\hspace{-.1cm} \begin{bmatrix}
    \mathbf{0}^\top_{2n} & 1 \end{bmatrix}^\top\hspace{-.2cm}.
\end{align*}
Moreover, 
\begin{align}\label{eq:optimal_m}
  \tilde{m}^*=(\xi^*)^\top \begin{bmatrix} I_{n} & -I_{n} & \mathbf{0}^\top_{n} \end{bmatrix}^\top,
\end{align} 
where $\tilde{m}^*$ and $\xi^*$ are solutions to the robust
optimization in \eqref{eq:robust_measure} and the LP in \eqref{eq:LP},
respectively.
\end{lem}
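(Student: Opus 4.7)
The plan is to transform the robust program \eqref{eq:robust_measure} into the stated LP by three standard operations: (i) an epigraph reformulation of the absolute value by a scalar upper bound $t$, (ii) a change of variables that replaces the nonsmooth pair $(\hat m^\oplus,\hat m^\ominus)$ by a pair of free variables $(u,v)$ constrained by simple sign inequalities and satisfying $\tilde m = u-v$, and (iii) LP duality applied to the worst-case inner maximization over the polytope $\{(\underline x,\overline x)\,:\,\underline x_0\le\underline x\le\overline x\le\overline x_0\}$.

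First I would set $u\triangleq \hat m^\oplus-m^\oplus$, $v\triangleq \hat m^\ominus-m^\ominus$ and $\xi\triangleq[u^\top,v^\top,t]^\top\in\mathbb{R}^{2n+1}$. The nonnegativity requirements $\hat m^\oplus,\hat m^\ominus\ge 0$ translate exactly into $u\ge -m^\oplus$, $v\ge -m^\ominus$, which, together with the natural $t\ge 0$, are precisely the block-diagonal inequalities $\Lambda\xi\le l$. Using $\hat m = \hat m^\oplus-\hat m^\ominus$ and $m=m^\oplus-m^\ominus$ gives $\tilde m = \hat m-m = u-v$, so the selection matrix in \eqref{eq:optimal_m} recovers $\tilde m^*$ from $\xi^*$. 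With these substitutions, the quantity whose absolute value is to be minimized becomes the linear form $u^\top\underline x-v^\top\overline x$, and the objective $\min t$ subject to $|u^\top\underline x-v^\top\overline x|\le t$ for all admissible $(\underline x,\overline x)$ is equivalent to the original problem.

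Next, I would split the absolute value into the two semi-infinite constraints $u^\top\underline x-v^\top\overline x\le t$ and $-u^\top\underline x+v^\top\overline x\le t$, each required to hold over the polytope $\mathcal{P}=\{(\overline x,\underline x):Ay\le b\}$ that encodes $\overline x\le \overline x_0$, $\underline x\ge \underline x_0$, $\underline x\le \overline x$. Each semi-infinite constraint is equivalent to requiring the optimal value of a linear program over $\mathcal{P}$ to be $\le t$. Applying strong LP duality to the inner maximizations and collecting the two resulting dual multiplier vectors as $p_1,p_2\in\mathbb{R}_{\ge 0}^{3n}$, the dual-feasibility equations read $\Gamma^\top p_1=\xi$ and $-\Gamma^\top p_2=\xi$, where the zero last row of $\Gamma^\top$ is used to absorb the $t$-variable into the dual-objective inequality, which, after shifting $t$ onto the $\xi$-side, takes the compact form $p_j^\top d\le 0$ with $d=[\overline x_0^\top,\underline x_0^\top,\mathbf{0}_n^\top]^\top$.

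The main obstacle is the careful bookkeeping: one has to align the ordering of the polytope inequalities, the signs of the two dual systems (the plus and minus sides of $|\cdot|$), and the convention that $u,v$ represent the differences $\hat m^\oplus-m^\oplus$ and $\hat m^\ominus-m^\ominus$ rather than $\hat m^\oplus,\hat m^\ominus$ themselves, so that the resulting matrices are exactly $\Lambda$, $\Gamma$, $d$, $l$ and $c$ as written. Once this identification is verified block-by-block, strong LP duality gives that \eqref{eq:robust_measure} and \eqref{eq:LP} share the same optimal value and that their optimizers are in one-to-one correspondence through \eqref{eq:optimal_m}; a concluding remark verifies that the LP relaxation of $(u,v)$ is tight because the objective strictly rewards reducing $|u|$ and $|v|$, which forces complementarity $u\cdot v=0$ componentwise at the optimum and thus recovers the true $(\hat m^\oplus,\hat m^\ominus)$ associated with $\tilde m^*$.
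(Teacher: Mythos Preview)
Your proposal is correct and follows essentially the same route as the paper: an epigraph reformulation of the absolute value, the change of variables $\eta=\hat m^\oplus-m^\oplus$, $\rho=\hat m^\ominus-m^\ominus$, $\xi=[\eta,\rho,\theta]^\top$, and then dualization of the two semi-infinite linear constraints over the interval polytope (the paper invokes \cite[Section~1.2.1]{ben2009robust} for this step, which is precisely the LP-duality argument you spell out). Your closing remark about componentwise complementarity of $(u,v)$ at the optimum is an extra justification that the paper's proof omits; it is a useful observation, though the equivalence as stated only requires the optimal-value correspondence and the recovery formula $\tilde m^*=u^*-v^*$.
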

\begin{proof}
  First, note that the robust program in \eqref{eq:robust_measure} can
  be equivalently written as follows, with $\hat{m} \triangleq
  m+\tilde{m}$:
\begin{align*}
\begin{array}{rl}
  &\min_{\{\tilde{m} \in \mathbb{R}^{1 \times n}, \theta \geq 0\}} \theta \\
  &\text{s.t.} -\theta \hspace{-.1cm}\leq\hspace{-.1cm}
  (\hat{m}^{\oplus}\hspace{-.1cm}-\hspace{-.1cm}m^\oplus)\underline{x}\hspace{-.1cm}-\hspace{-.1cm}(\hat{m}^{\ominus}\hspace{-.1cm}-\hspace{-.1cm}m^\ominus)\overline{x}
  \hspace{-.1cm}\leq\hspace{-.1cm} \theta, \forall
  [\underline{x},\overline{x}] \subseteq
  [\underline{x}_0,\overline{x}_0].
\end{array}
\end{align*}  
In turn, by considering the change of variables $\eta \triangleq
\hat{m}^\oplus-m^\oplus, \rho \triangleq \hat{m}^\ominus-m^\ominus,
\xi \triangleq [\eta \ \rho \ \theta]^\top, a_1 \triangleq
[\underline{x}^\top -\overline{x}^\top -1]^\top,a_2 \triangleq
[-\underline{x}^\top \overline{x}^\top -1]^\top$, the latter can be
reformulated as:
\begin{align*}
\begin{array}{rl}
  &\min_{\{\xi\}} c^\top \xi \\
  &\st  \mok{\Lambda} \xi \hspace{-.1cm}\leq\hspace{-.1cm} l, [a_1 \
  a_2]^\top \xi \leq \mathbf{0}_2, \forall a_1, a_2 \ \st 
  \mok{\Gamma}a_1\hspace{-.1cm} \leq \hspace{-.1cm} d, -\mok{\Gamma}a_2 \hspace{-.1cm} \leq
  \hspace{-.1cm} d,  
\end{array}
\end{align*}
with $c,\mok{\Gamma},\mok{\Lambda},d$ and $l$ given under \eqref{eq:LP}. Furthermore, by
\cite[Section 1.2.1]{ben2009robust}, the above robust LP can be
equivalently cast as the regular LP in \eqref{eq:LP}. Finally, with
$\xi^* = [\eta^* \ \rho^* \ \theta^*]^\top$ being a solution to
\eqref{eq:LP}, $(\xi^*)^\top \begin{bmatrix} I_{n} & -I_{n} &
  \mathbf{0}^\top_{n} \end{bmatrix}^\top=\eta^* -
\rho^*=\hat{m}^{*\oplus}-m^\oplus- (
\hat{m}^{*\ominus}-m^\ominus)=\hat{m}^*-m=\tilde{m}^*$.
\end{proof}

\subsection{Guaranteed Private Mechanism and Accuracy
  Analysis}\vspace{-0.05cm}
In this subsection,  {\color{black}and in view of the results of the
  previous section, we slightly modify our privacy
  mechanism, and investigate its accuracy when applied to a
  distributed optimization setting.} 
  In particular, we show that, regardless of the distributed
and convergent optimization algorithm employed, a perturbation of the
problem objective functions via the map of the form 
of Theorem~\ref{thm:guar_priv} ensures guaranteed privacy, \kj{while remains
  reasonably accurate. In other words, we show that there is a
  computable and reasonably tight upper bound for the error caused by
  perturbations, regardless of the chosen optimization algorithm}.
\kj{To do so}, we require that each agent $i \in \until{N}$ computes
the function $g_i$, where $\forall x \in \mathcal{X}_0$, \mok{$g(x)
  \triangleq \sum_{i=1}^Ng_i(x)$,} 
{\color{black} \begin{align}\label{eq:local_perturbation} 
  g_i(x)&=f_i(x)+ \tilde{m}_i x 
\end{align}
where, as we explain next, $\tilde{m}_i$ is \kj{constrained by a mild condition that is characterized by} 
$\tilde{m}_i^* =
(\xi^*_i)^\top \begin{bmatrix} I_{n} & -I_{n} &
  \mathbf{0}^\top_{n} \end{bmatrix}^\top,$
and $\xi^*_i$ solves \eqref{eq:LP} after replacing $m$ with
$m_i$. After this process, agents implement \emph{any} distributed
optimization algorithm with the modified objective functions
$\{g_i\}_{i=1}^N$. Let 
 }
  \vspace{-.4cm} {\small
\begin{align}\label{eq:g_opt}
\begin{array}{rl}
  \hspace{-.4cm}{\mathbb{X}}_g\hspace{-.1cm}=\hspace{-.1cm}\displaystyle{\argmin_{x \in \mathcal{X}_0}}\hspace{-.1cm} \sum_{i=1}^N g_i(x)  \triangleq \{\tilde{x}^* \hspace{-.1cm}\in \hspace{-.1cm}
  \mathcal{X}_0 \,|\, g(\tilde{x}^*)\leq g(x), \forall x
  \hspace{-.1cm}\in \hspace{-.1cm}\mathcal{X}_0\},\\
  \hspace{-.4cm}{\mathbb{X}}_f\hspace{-.1cm}=\hspace{-.1cm}\displaystyle{\argmin_{x \in \mathcal{X}_0}}\hspace{-.1cm} \sum_{i=1}^N f_i(x)
 \triangleq\{{x}^* \hspace{-.1cm}\in \hspace{-.1cm} \mathcal{X}_0\, | \,f({x}^*)\leq f(x), \forall x \hspace{-.1cm}\in \hspace{-.1cm}\mathcal{X}_0\},
\end{array}
\end{align}}

\vspace{-.4cm}
\noindent denote the set of possible outputs of the distributed
algorithm, and the \emph{set} of optimizers of the original problem
\eqref{eq:opt_init}, respectively.  The following theorem
characterizes the accuracy and privacy of the {\color{black}
corresponding perturbation \kj{introduced in}
  Section~\ref{sec:optimal-perturbation}}, 
\kj{in terms of a computably-tractable upper bound for the errors
  incurred when} {\color{black} using
  them.} 

\begin{thm}[{\color{black} Guaranteed-Private Mechanism and Its
    Accuracy}] \label{thm:accuracy}
  Consider a group of $N$ agents that aim to collectively solve the
  distributed nonconvex optimization \eqref{eq:opt}. Suppose
  Assumption~\ref{ass:mixed_monotonicity} holds, 
  denote $\Delta\hspace{-.1cm} \triangleq \hspace{-.1cm}
    \overline{x}_0 \hspace{-.1cm}-\hspace{-.1cm}\underline{x}_0$,
  and define 
  \begin{align}\label{eq:delta}
    \delta^*_{i} \triangleq \max (|\tilde{m}^{*\oplus}_i\overline{x}_0-
    \tilde{m}^{*\ominus}_i\underline{x}_0|,|\tilde{m}^{*\oplus}_i\underline{x}_0-\tilde{m}^{*\ominus}_i\overline{x}_0|),
  \end{align}
  with $\tilde{m}^*_i$ given in \eqref{eq:local_perturbation}. Then,
\vspace{-.2cm}
 \begin{enumerate}[(i)]
 \item {\color{black} For any $\delta_i \ge \delta^*_{i}$, the
     family $G=\{g_i\}_{i=1}^N$ with an arbitrary perturbation
     $\tilde{m}_i$ such that $\tilde{m}_i \Delta \le \delta_i^*$, belongs
     to a $\delta \ge \max_i\delta_i^*$ vicinity of the family $F =
     \{f_i\}_{i=1}^N$. \kj{Moreover}, the mapping $\mathcal{M}$ given \kj{in}
     Theorem~\ref{thm:guar_priv} for this class of perturbations, is
     $\epsilon = \max_{i \in \{1,\dots, N\}}\epsilon_i$-guaranteed
     private, where $\epsilon_i =
     \beta(f_i,\tilde{m}_i,\mathcal{X}_0,\delta_{i})$.
    \label{item:privacy}}
\item The (worst-case) accuracy error, defined
  as:\label{item:accuracy}
\begin{align}\label{eq:error}
  e(\{f_i\}_{i=1}^N,\{\tilde{m}_i\}_{i=1}^N,\mathcal{X}_0)\hspace{-.1cm}\triangleq\hspace{-.2cm}\max_{x^*
    \in \mathbb{X}_f,\tilde{x}^* \in \mathbb{X}_g}\hspace{-.2cm}
  \|x^*\hspace{-.1cm}-\hspace{-.1cm}\tilde{x}^*\|_{\infty}
\end{align} 
satisfies the following upper
bound 
\begin{equation*}
  e(\{f_i\}_{i=1}^N,\{\tilde{m}_i\}_{i=1}^N,\mathcal{X}_0)
\leq \textstyle{\mathrm{UB}}, \ \text{ where} \nonumber\\
\end{equation*}
 \begin{align}\label{eq:error_up}
 &\begin{array}{rl}
   &\tUB =\max\limits_{\{y \in \mathcal{X}_0,z\in \mathcal{X}_0,\theta \in \mathbb{R}_{\geq 0}\}}\theta\\
   & \st  -\theta \mathbf{1}_n \hspace{-.1cm}\leq\hspace{-.1cm} y\hspace{-.1cm}-\hspace{-.1cm}z \hspace{-.1cm}\leq\hspace{-.1cm} \theta \mathbf{1}_n,
   \tilde{m}_i(y\hspace{-.1cm}-\hspace{-.1cm}z) \hspace{-.1cm}\leq\hspace{-.1cm} {0}, 1 \hspace{-.1cm} \leq \hspace{-.1cm}  i \hspace{-.1cm} \leq \hspace{-.1cm} N,
 \end{array}
 \end{align}  
 and $\mathbb{X}_f,\mathbb{X}_g$ are given in \eqref{eq:g_opt}.
\end{enumerate}
\end{thm}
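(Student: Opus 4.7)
My plan for part (\ref{item:privacy}) is to verify the vicinity condition and then apply Theorem~\ref{thm:guar_priv} directly. I would first observe that by construction in~\eqref{eq:local_perturbation}, $g_i(x) - f_i(x) = \tilde{m}_i x$, and for $x \in \mathcal{X}_0 = [\underline{x}_0, \overline{x}_0]$ the scalar $\tilde{m}_i x = \tilde{m}_i^\oplus x - \tilde{m}_i^\ominus x$ is monotone in $x$ componentwise, so its extrema on $\mathcal{X}_0$ are attained at one of the two corners $\tilde{m}_i^\oplus \overline{x}_0 - \tilde{m}_i^\ominus \underline{x}_0$ and $\tilde{m}_i^\oplus \underline{x}_0 - \tilde{m}_i^\ominus \overline{x}_0$. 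Combined with the definition~\eqref{eq:delta}, this yields $|g_i(x) - f_i(x)| \le \delta_i^*$ for the optimal choice, and more generally $|g_i(x) - f_i(x)| \le \delta_i$ for any admissible $\tilde{m}_i$ satisfying the stated bound and any $\delta_i \ge \delta_i^*$. Thus $G$ is in a $\delta$-vicinity of $F$ with $\delta \ge \max_i \delta_i^*$, and invoking Theorem~\ref{thm:guar_priv} with these $\delta_i$ gives $\epsilon = \max_i \beta(f_i, \tilde{m}_i, \mathcal{X}_0, \delta_i)$-guaranteed privacy.

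For part (\ref{item:accuracy}) I would exploit the optimality conditions of $x^* \in \mathbb{X}_f$ and $\tilde{x}^* \in \mathbb{X}_g$ to show that $(y, z) = (\tilde{x}^*, x^*)$ is feasible for the robust program in~\eqref{eq:error_up}. Since $x^*$ minimizes $f$ on $\mathcal{X}_0$, $f(x^*) \le f(\tilde{x}^*)$. Since $\tilde{x}^*$ minimizes $g = f + \sum_i \tilde{m}_i x$ on $\mathcal{X}_0$, $g(\tilde{x}^*) \le g(x^*)$, which expands to $f(\tilde{x}^*) - f(x^*) \le \sum_i \tilde{m}_i (x^* - \tilde{x}^*)$. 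Chaining these two inequalities gives $\sum_i \tilde{m}_i (\tilde{x}^* - x^*) \le 0$, which matches the aggregate sign constraint of~\eqref{eq:error_up} when $y = \tilde{x}^*$ and $z = x^*$. Because $x^*, \tilde{x}^* \in \mathcal{X}_0$, the box constraint $-\theta \mathbf{1}_n \le y - z \le \theta \mathbf{1}_n$ is then satisfied with $\theta = \|x^* - \tilde{x}^*\|_\infty$. Feasibility of this $\theta$ in the maximization~\eqref{eq:error_up} immediately yields $\|x^* - \tilde{x}^*\|_\infty \le \tUB$, and taking the maximum over all $(x^*, \tilde{x}^*) \in \mathbb{X}_f \times \mathbb{X}_g$ returns the stated bound on the accuracy error $e(\cdot)$.

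The main obstacle I expect to encounter in part (\ref{item:accuracy}) is the discrepancy between the per-agent constraints $\tilde{m}_i(y-z) \le 0$ for each $i \in \until{N}$ in~\eqref{eq:error_up} and the single aggregated inequality $\sum_i \tilde{m}_i(\tilde{x}^* - x^*) \le 0$ that the optimality conditions naturally supply. To close this gap I would examine two avenues: first, whether the specific structure of $\tilde{m}_i^*$ produced by the LP in Lemma~\ref{lem:tractable} (together with the distributed algorithm's consensus/stationarity conditions on the perturbed local objectives $g_i$) forces each summand $\tilde{m}_i(\tilde{x}^* - x^*)$ to be nonpositive individually; second, whether, absent such a property, the per-agent constraints should be read as an intentionally conservative formulation whose feasibility for $(y,z)=(\tilde{x}^*,x^*)$ has to be argued through additional local optimality reasoning on each $g_i$. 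My plan is to pursue the first avenue by analyzing the KKT conditions of the LP~\eqref{eq:LP} and the optimality of each $g_i$ at $\tilde{x}^*$, and, if that fails, to fall back on the aggregate-constraint version, which still delivers a valid, possibly slightly looser, upper bound via the argument above.
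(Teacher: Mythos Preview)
Your proposal follows the paper's proof almost step for step: for part~(\ref{item:privacy}) the paper uses precisely the interval bound $\tilde m_ix\in[\tilde m_i^{\oplus}\underline x_0-\tilde m_i^{\ominus}\overline x_0,\ \tilde m_i^{\oplus}\overline x_0-\tilde m_i^{\ominus}\underline x_0]$ together with $|\tilde m_i|\Delta\le\delta_i^*$ and then invokes Theorem~\ref{thm:guar_priv}; for part~(\ref{item:accuracy}) it chains $f(x^*)\le f(\tilde x^*)$ with $g(\tilde x^*)\le g(x^*)$ to get the linear inequality on $x^*-\tilde x^*$, and finishes by the feasible-set inclusion $\mathbb X_f\times\mathbb X_g\subseteq\mathcal X_0\times\mathcal X_0$.

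The one point worth noting is the obstacle you raise concerning the per-agent constraints $\tilde m_i(y-z)\le 0$ versus the single aggregate inequality $\tilde m(y-z)\le 0$ that the two optimality conditions actually yield. The paper's proof does \emph{not} close this gap: after deriving the aggregate inequality it simply writes ``given any $(x^*,\tilde x^*)\in\mathbb X_f\times\mathbb X_g$ and any perturbation slope, it is necessary that $\tilde m(x^*-\tilde x^*)\le 0$'' and then passes to the per-agent constraints in~\eqref{eq:error_up_2} without further argument. So you are not missing a step that the paper supplies; your fallback of retaining only the aggregated constraint (which gives a rigorous, possibly slightly looser upper bound by the same feasible-set-inclusion reasoning) is in fact a cleaner way to make the argument airtight.
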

\begin{proof}
  To prove \eqref{item:privacy}, note that by
  \eqref{eq:local_perturbation} and \cite[Lemma
    1]{efimov2013interval}, $|g_i(x)-f_i(x)|=\tilde{m}_ix \in
  [\tilde{m}^{\oplus}_i\underline{x}_0-\tilde{m}^{\ominus}_i\overline{x}_0,\tilde{m}^{\oplus}_i\overline{x}_0-\tilde{m}^{\ominus}_i\underline{x}_0]$,
  $\forall x \in \mathcal{X}_0$, implying that {\color{black} $|g_i(x)-f_i(x)| \leq
  |\tilde{m}_i| \Delta  \leq \delta^*_{i} 
  \triangleq \max
  (|\tilde{m}^{*\oplus}_i\overline{x}_0-\tilde{m}^{*\ominus}_i\underline{x}_0|,|\tilde{m}^{*\oplus}_i\underline{x}_0-\tilde{m}^{*\ominus}_i\overline{x}_0|)$
  $
  \leq \delta_i$,
  $\forall x \in \mathcal{X}_0$.} Then, \eqref{item:privacy} follows
  from applying Theorem \ref{thm:guar_priv} on each $f_i$. 

  To prove \eqref{item:accuracy}, first note that for any
  $(x^*,\tilde{x}^*) \in \mathbb{X}_f \times \mathbb{X}_g$, the
  following holds:
\begin{align*}
f(x^*)&=h(x^*)+mx^*\leq f(\tilde{x}^*)=f(\tilde{x}^*)+\tilde{m}\tilde{x}^*-\tilde{m}\tilde{x}^*\\
         &=\hspace{-.1cm}g(\tilde{x}^*)\hspace{-.1cm}-\hspace{-.1cm}\tilde{m}\tilde{x}^*
         \hspace{-.15cm}\leq\hspace{-.1cm}
         g(x^*)\hspace{-.1cm}-\hspace{-.1cm}\tilde{m}\tilde{x}^*\hspace{-.1cm}=\hspace{-.1cm}h(x^*)\hspace{-.1cm}+\hspace{-.1cm}mx^*\hspace{-.1cm}+\hspace{-.1cm}\tilde{m}x^*\hspace{-.15cm}-\hspace{-.1cm}\tilde{m}\tilde{x}^*,
\end{align*}
where the first and second inequalities follow from the fact that
$x^*$ and $\tilde{x}^*$ are minimizers of $f$ and $g$,
respectively. Conclusively, given any $(x^*,\tilde{x}^*) \in
\mathbb{X}_f \times \mathbb{X}_g$ and any perturbation slope, 
it is necessary that
$\tilde{m}(x^*-\tilde{x}^*) \leq 0$. By this and defining
$\theta=\|x^*-\tilde{x}^*\|_{\infty}$, the program in \eqref{eq:error}
is equivalent to
\begin{align}\label{eq:error_up_2}
 \begin{array}{rl}
 &e(\{f_i\}_{i=1}^N,\{\tilde{m}_i\}_{i=1}^N,\mathcal{X}_0)\hspace{-.1cm}=\hspace{-.1cm}\max\limits_{\{y \in \mathbb{X}_f,z\in \mathbb{X}_g,\theta \in \mathbb{R}_{\geq 0}\}}\theta\\
 & \text{s.t.}  -\theta \mathbf{1}_n \hspace{-.1cm}\leq\hspace{-.1cm} y\hspace{-.1cm}-\hspace{-.1cm}z \hspace{-.1cm}\leq\hspace{-.1cm} \theta \mathbf{1}_n,
 \tilde{m}_i(y\hspace{-.1cm}-\hspace{-.1cm}z) \hspace{-.1cm}\leq\hspace{-.1cm} {0}, 1 \hspace{-.1cm} \leq \hspace{-.1cm}  i \hspace{-.1cm} \leq \hspace{-.1cm} N.
 \end{array}
 \end{align}  
 Finally, comparing \eqref{eq:error_up} and \eqref{eq:error_up_2}
 indicates that the optimal value of the former is an upper bound for
 the optimal value of the latter since the feasible set of the latter
 is a subset of the one for the former, i.e., $\mathbb{X}_f \times
 \mathbb{X}_g \subseteq \mathcal{X}_0 \times \mathcal{X}_0$.
\end{proof}
\kj{As a consequence of Theorem \ref{thm:accuracy}, since the choice
  of $\tilde{m}_i$ is almost arbitrary (constrained by the mild
  condition $\tilde{m}_i \Delta \le \delta_i^*$), then, it is very
  unlikely for an adversary to know/guess $\tilde{m}_i$.  From this
  perspective, the process remains private, with the
  level of privacy given in \eqref{eq:epsilon}, while the entire
  process remains accurate with and error less than the upper bound
  given in \eqref{eq:error_up}. Furthermore, $\tilde{m}_i \Delta \le
  \delta_i^*$ can be interpreted as a significantly weaker counterpart
  of the required conditions on the perturbation noise in differential
  privacy, e.g., in \cite{nozari2016differentially}.}
It is \kj{also} worth emphasizing that the computed accuracy error upper bound,
though might be conservative depending on 
objective function {\color{black} and constraints},
but on the other hand, it provides an upper bound for the accuracy
error \emph{regardless of the chosen algorithm}. This can be
interpreted as an additional degree of resiliency or the proposed
privacy-preserving mechanism against perturbing the
selected optimization algorithms. 

  \vspace{-0.1cm}
\section{Illustrative Example}
To illustrate the effectiveness of our approach, we considered a
nonconvex distributed optimization example from
\cite{tatarenko2017non}, which is in the from of \eqref{eq:opt}, with
$n=1,N=3$ and $\mathcal{X}_0=[-10,10]$,  where
$f_1(x)=(x^3-16x)(x+2)$, $f_2(x)=(0.5x^3+x^2)(x-4)$ and
$f_3(x)=(x+2)^2(x-4)$, with the global optimizer $x^*=[2.62 \ 2.62 \
2.62]^\top$. We implemented the following 5 algorithms:
the \emph{nonconvex distributed optimization} (NDO) proposed in
\cite{tatarenko2017non}, the \emph{nonconvex decentralized gradient
  descent} (NDGD) approach in \cite{zeng2018nonconvex}, the
\emph{distributed nonconvex constrained optimization} (DNCO) method
introduced in \cite{scutari2019distributed}, the \emph{distributed
  nonconvex first-order optimization} (DNCFO) algorithm in
\cite{sun2019distributed}, and the \emph{distributed zero-order
  algorithm} (DZOA) from \cite{tang2020distributed}.

Using Lemma \ref{lem:tractable}, \mok{a set of} tractable perturbation
slopes was obtained as $\tilde{m}^*=\{\tilde{m}^*_1, \tilde{m}^*_2,
\tilde{m}^*_3\}=\{0.52,0.73, 0.38\}$ through solving the LP in
\eqref{eq:LP}. {\color{black} The $\epsilon$-guaranteed privacy gaps
  of the mechanism defined in Theorem~\ref{thm:accuracy} when using
  $\tilde{m}_i^*$ are given by} 
$\{\epsilon^*_1,\epsilon^*_2, \epsilon^*_3\}=\{0.14,0.32,0.68\}$,
respectively. Further, to study the compromise between privacy and
accuracy, we randomly picked $50$ samples of $\tilde{m}$ chosen from
the normal distribution $\mathcal{N}(\tilde{m}^*,1)$ and applied
Theorem \ref{thm:accuracy}, aiming to compute the corresponding
$\epsilon=\max_{\{i=1,\dots,3\}}\epsilon_i$ and the worst-case
accuracy error, i.e.,
$e(\{f_i\}_{i=1}^3,\{\tilde{m}_i\}_{i=1}^3,\mathcal{X}_0)$
(cf.~\eqref{eq:error}), as well as the theoretical error upper bound
\mok{$\tUB$} (cf.~\eqref{eq:error_up}) for each sampled
$\tilde{m}$. For illustration, in Figure~\ref{fig:privacy_accuracy}
the computed values for the privacy gap ($\epsilon$) are sorted in an
ascending order along the horizontal axis, \mok{which}, as can be
observed, resulted in descending (decreasing) corresponding errors, \mok{after
  algorithms converge. As can be observed, at the highest privacy
  ($\epsilon=0.116$), we obtain the lowest accuracy (i.e., highest
  accuracy error) which the can be very tightly approximated with the
  theoretical upper bound for all the optimization
  algorithms. Moreover, eventually, when privacy is the lowest
  ($\epsilon=0.155$), all the algorithms converge to the lowest accuracy
  error ($e=0.01$), which \kj{again} can be reasonably over-approximated by the
  corresponding theoretical upper bound ($\tUB=0.025$).} 
Finally, as Figure \ref{fig:privacy_accuracy} shows, the theoretical
upper bound is independent of the chosen optimization method \kj{and is} bounding
all error sequences, {\color{black} noting that the different
  convergence results depend on the type of nonconvex method
  employed.}
\begin{figure}[t!]
  \centering
  {\includegraphics[width=0.98\columnwidth]{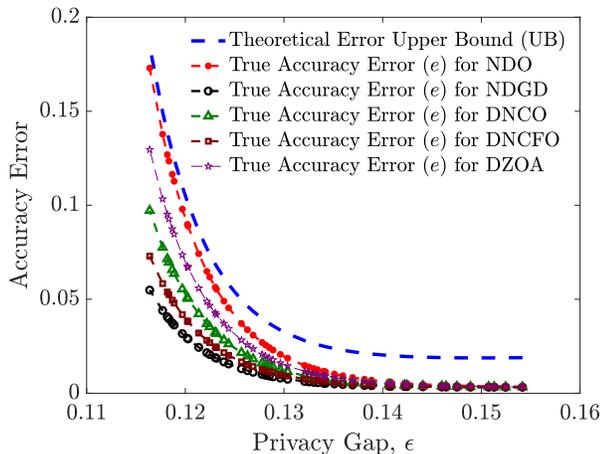}}
  \caption{{\small Theoretical accuracy error upper bound ($\tUB$) computed according to \eqref{eq:error_up} in Theorem \ref{thm:accuracy} with the perturbation slope $\tilde{m}^*$ obtained by solving the LP in \ref{eq:LP}, as well as true accuracy error $\|x^*-\tilde{x}^*\|_{\infty}$ for $50$ randomly sampled perturbations $\tilde{m}$ from the normal distribution $\mathcal{N}(\tilde{m}^*,1)$ obtained by applying the nonconvex distributed optimization algorithms NDO \cite{tatarenko2017non}, NDGD \cite{zeng2018nonconvex}, DNCO \cite{scutari2019distributed}, DNCFO \cite{sun2019distributed} and DZOA \cite{tang2020distributed}.}}
  \label{fig:privacy_accuracy}
\end{figure}
 \vspace{-.1cm}
 \section{Conclusion and \md{Future Work}} \label{sec:conclusion} This
 paper introduced a novel notion of guaranteed privacy for a
 {\color{black} broad class of} \kj{differentiable locally
   Lipschitz} nonconvex distributed optimization problems. We showed how
 this property holds for a deterministic type of perturbation
 mechanisms, which exploit the Jacobian sign-stability of the problem
 objective functions. \kj{Furthermore,} 
 using robust optimization techniques, a tractable approach was
 \kj{provided} to further restrict \mok{the mechanism} in a way that
 allows for the quantification of the accuracy bounds of the
 method. 
 In particular, these bounds were shown to be decreasing with respect
 to the privacy gap, \kj{as illustrated through simulations}. \mok{Future \mok{work} will consider utilizing
   the results in Theorem \ref{thm:guar_priv} to design
   privacy-preserving estimation, verification and resource/task allocation
   algorithms in networked CPS.} 
\bibliographystyle{unsrturl}

{\tiny
\bibliography{biblio}
}

\end{document}